\definecolor{ForestGreen}{rgb}{0.1,0.6,0.05}
\definecolor{EgyptBlue}{rgb}{0.063,0.1,0.6}
\newtheorem{theorem}{Theorem}
\newtheorem{lem}[theorem]{Lemma}
\theoremstyle{definition}
\newtheorem{rmk}[theorem]{Remark}
\let\OLDthebibliography\thebibliography
\renewcommand\thebibliography[1]{
	\OLDthebibliography{#1}
	\setlength{\parskip}{1pt}
	\setlength{\itemsep}{1pt plus 0.3ex}
}
\numberwithin{equation}{section}
\numberwithin{theorem}{section}
\numberwithin{equation}{section}
\numberwithin{theorem}{section}
\definecolor{ForestGreen}{rgb}{0.1,0.6,0.05}
\definecolor{EgyptBlue}{rgb}{0.063,0.1,0.6}
\subjclass[2010]{Primary  58J50; Secondary 35P15}
\title [isoperimetric inequality in ROSS]{An isoperimetric inequality for the harmonic mean of Steklov eigenvalues in rank one symmetric spaces}
\author{Hemangi Madhusudan Shah$^1$ \and Sheela Verma$^2$}
\address{$1$ Harish-Chandra Research Institute, A CI of Homi Bhaba National Institute, Chhatnag Road, Jhunsi, Prayagraj-211019, India}
\email{hemangimshah@hri.res.in}
\address{$2$ Corresponding author, Department of Mathematical Sciences, Indian Institute of Technology(BHU), Varanasi}
\email{sheela.mat@iitbhu.ac.in}
\keywords{Isoperimetric inequality; Steklov eigenvalues; Rank-$1$ symmetric spaces; Harmonic manifolds}
\begin{document}
	
\begin{abstract}
In this note, an isoperimetric inequality for the harmonic mean of lower order Steklov eigenvalues is proved on bounded domains in noncompact rank-$1$ symmetric spaces. This work extends result of \cite{BR.01} and \cite{V.21} proved for bounded domains in Euclidean space and Hyperbolic space, respectively.
\end{abstract}
	
\maketitle 

\section{Introduction}
In spectral geometry, the problem of finding an optimal domain for Laplace eigenvalues or some combinations of Laplace eigenvalues is of wide interest. In this paper, we shall study the geometric behaviour of the Steklov eigenvalues on bounded domains in noncompact rank-$1$ symmetric spaces. On a compact, complete Riemannian manifold $M$ with smooth boundary $\partial M$, the Steklov eigenvalue problem is given by,
\begin{align*}
\begin{array}{lcl}
&\Delta u  & = 0\;\; \mbox{in} \; M \\
&\frac{\partial u}{\partial \nu} & = \mu u \;\; \mbox{on} \; \partial M,
\end{array}
\end{align*}
where $\displaystyle\frac{\partial u}{\partial \nu}$ denotes the  directional derivative of $u$ in the direction of the outward unit normal $\nu$ and $\mu$ is a real number which is called Steklov eigenvalue corresponding to eigenfunction $u$. It is well known that the Steklov eigenvalues are discrete and form an increasing sequence
$$ 0 = \mu_{0}(M) < \mu_{1}(M) \leq \mu_{2}(M) \leq \cdots \nearrow \infty.$$
Using a technique of conformal mapping, Weinstock \cite{W.54} proved that for a simply connected planar domain $\Omega$ with analytic boundary of perimeter $l$, 
 \begin{align} \label{Weinstock inequality}
\mu_{1}(\Omega) \leq \frac{2 \pi}{l}.
 \end{align}
 Hersch and Payne \cite{HP.68} noticed that the Weinstock’s proof provides a  more general result, namely
\begin{align*}
    \frac{1}{\mu_{1}(\Omega)} + \frac{1}{\mu_{2}(\Omega)} \geq \frac{l}{\pi}.
\end{align*}
\noindent
Later, F.Brock \cite{BR.01} extended this result for bounded Lipschitz domain $\Omega$ in $\mathbb{R}^{n}$ under the volume constraint and proved that
\begin{align}\label{Brock result}
   \sum_{i=1}^{n} \frac{1}{\mu_{i}(\Omega)}  \geq n \ R = \sum_{i=1}^{n} \frac{1}{\mu_{i}(B(R))},
\end{align}
where $B(R) \subset \mathbb{R}^{n}$ is a ball of radius $R$ having the same volume as $\Omega$. Recently, S.Verma generalised \eqref{Brock result} for the first $(n-1)$ nonzero Steklov eigenvalues on bounded domains in an $n$-dimensional hyperbolic space \cite{V.21}. In \cite{BS.14}, Binoy and Santhanam studied the Steklov eigenvalue problem on bounded domains of a noncompact rank-$1$ symmetric space and proved that among all the domains of fixed volume, the ball maximizes the first nonzero Steklov eigenvalue. For more results related to spectral geometry of rank-$1$ symmetric spaces, we refer to \cite{BS.14, CR.19,MW.22, S.07, VS.20}.\\

In this article, we extend the results of \cite{BR.01} and \cite{V.21} and prove an isoperimetric bound for the lower Steklov eigenvalues on convex domains in noncompact rank-$1$  symmetric spaces. More precisely, we prove the following theorem. 

\begin{theorem} \label{thm:main theorem}
Let ($M,ds^{2})$ be a noncompact rank-$1$ symmetric space with curvature $-4 \leq K_{M} \leq -1$ and $\Omega$ be a bounded convex domain in $M$ having smooth boundary $\partial \Omega$. Let $B(R) \subset M$ denote a geodesic ball of radius $R>0$ centered at $p$ such that vol$(\Omega)$ = vol$(B(R))$. Then
\begin{align*}
\sum_{i=1}^{l} \frac{1}{\mu_{i}(\Omega)} \geq \sum_{i=1}^{l} \frac{1}{\mu_{i}(B(R))},
\end{align*}
where
\begin{align*}
l=\begin{cases}
k-1 , & n=1 \\
k(n-1), & n>1.
\end{cases}
\end{align*}
Further, the equality holds if and only if $\Omega$ is a geodesic ball.
\end{theorem}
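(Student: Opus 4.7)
The plan is to extend the test-function method of \cite{BR.01, V.21} to the rank-$1$ symmetric setting, using Steklov eigenfunctions transplanted from the model geodesic ball $B(R)$.

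First, I would diagonalize the Steklov problem on $B(R)$ using the isotropy group $K_p$ of its center: separation of variables in geodesic polar coordinates produces eigenfunctions of the form $\psi(\exp_p(r\theta)) = h(r)\,Y(\theta)$, where $h$ solves a radial ODE tied to the mean curvature of geodesic spheres in $M$, and $Y$ is a spherical harmonic on the Berger-type sphere $\partial B(R)$. The first nonzero Steklov eigenvalue $\mu^{*}(R) = \mu_1(B(R)) = \cdots = \mu_l(B(R))$ has an $l$-dimensional eigenspace spanned by $h(r)\,Y_j(\theta)$ for $j = 1,\ldots,l$, where $\{Y_j\}$ is a basis of the first ``horizontal'' spherical harmonics adapted to the Hopf-type fibration of $\partial B(R)$ arising from the rank-$1$ structure; the integer $l$ in the theorem is precisely the real dimension of this horizontal representation.

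Next, for each candidate base point $q \in M$ define transplanted test functions $v_j^q(\exp_q(r\theta)) := h(r)\,Y_j(\theta)$ on $\Omega$. Since $M$ is Hadamard, $\exp_q$ is a global diffeomorphism, so these are well-defined; the convexity of $\Omega$ will be used later for a radial rearrangement. A Szeg\H{o}--Weinberger-type Brouwer-degree argument over $q \in M$ supplies a center $q$ with $\int_{\partial\Omega} v_j^q\,dS = 0$ for all $j$. Setting the $l\times l$ matrices $A_{ij} := \int_{\partial\Omega} v_i^q v_j^q\,dS$ and $B_{ij} := \int_\Omega \nabla v_i^q \cdot \nabla v_j^q\,dV$, the Rayleigh--Ritz min-max principle applied to the subspace $\mathrm{span}(1, v_1^q, \ldots, v_l^q)$ (using that the $v_j^q$ are $L^2(\partial\Omega)$-orthogonal to constants) yields
\begin{align*}
\sum_{i=1}^l \frac{1}{\mu_i(\Omega)} \;\geq\; \mathrm{tr}(B^{-1}A).
\end{align*}
On $B(R)$ the transplanted functions are exact eigenfunctions, so $B = \mu^{*}(R)\,A$ there and $\mathrm{tr}(B^{-1}A) = l/\mu^{*}(R)$; the theorem thus reduces to proving $\mathrm{tr}(B^{-1}A) \geq l/\mu^{*}(R)$ on $\Omega$.

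The main obstacle is precisely this final trace inequality. By the isotropy-invariance identity $\sum_{j=1}^l Y_j(\theta)^2 \equiv \mathrm{const}$ on the relevant unit horizontal sphere, the pooled quantities $\mathrm{tr}(A) = \sum_j\int_{\partial\Omega}(v_j^q)^2\,dS$ and $\mathrm{tr}(B) = \sum_j\int_\Omega|\nabla v_j^q|^2\,dV$ reduce to radial integrals weighted by the Jacobian of $\exp_q$. Unlike the constant-curvature case of \cite{V.21}, here one must simultaneously handle the non-constant but pinched sectional curvatures $-4 \leq K_M \leq -1$ and the anisotropy of the Berger geodesic spheres (when $k \geq 2$). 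I expect the proof to combine: (i) Jacobi-field (Rauch--G\"unther) comparisons for the Jacobian of $\exp_q$ under the curvature bounds; (ii) a trace inequality lower-bounding $\mathrm{tr}(B^{-1}A)$ via $\mathrm{tr}(A)$ and $\mathrm{tr}(B)$; and (iii) a Schwarz-type radial symmetrization of the pooled integrals, in which the monotonicity of $h$ and the convexity of $\Omega$ are used to control boundary terms under rearrangement. The equality case should follow by the rigidity of the symmetrization: saturation at each comparison forces $\Omega$ to be invariant under $K_q$, hence a geodesic ball centered at $q$.
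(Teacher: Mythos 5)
Your framework (center-of-mass/degree argument for orthogonality to constants, transplanted radial-times-angular test functions, and the Rayleigh--Ritz trace bound $\sum_{i=1}^{l}\mu_i(\Omega)^{-1}\geq \mathrm{tr}(B^{-1}A)$) is sound, and the trace step is in fact a legitimate alternative to the paper's device of QR-factorizing the matrix $\big(\int_{\partial\Omega}g(r)\frac{x_i}{r}u_j\,dA\big)$ to get $\int_{\partial\Omega}g(r)\frac{x_i}{r}u_j\,dA=0$ for $j<i$. But the proposal stops exactly where the theorem's content begins: the comparison $\mathrm{tr}(B^{-1}A)\geq l/\mu_1(B(R))$ is only ``expected'' to follow from (i)--(iii), with no argument given, and the equality case is likewise deferred. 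In the paper this step is carried by three concrete ingredients you do not supply: the pointwise bound of Lemma \ref{grad}, which uses the explicit formula $\nabla\frac{x_i}{r}=\sum_{j=2}^{k}\frac{\langle X_i,\eta_j\rangle}{\sinh r\cosh r}\eta_j+\sum_{j=k+1}^{kn}\frac{\langle X_i,\eta_j\rangle}{\sinh r}\eta_j$ and is precisely what forces the cutoff $l=k(n-1)$ (resp.\ $k-1$); the monotonicity of $r\mapsto (g'(r))^2+g^2(r)\lambda_1(S(r))$ (Binoy--Santhanam), which together with $\mathrm{vol}(\Omega)=\mathrm{vol}(B(R))$ gives $\int_{\Omega}\big((g')^2+g^2\lambda_1(S(r))\big)dV\leq\int_{B(R)}\big((g')^2+g^2\lambda_1(S(r))\big)dV$; and the boundary lemma $\int_{\partial\Omega}g^2(r)\,dA\geq g^2(R)\,\mathrm{vol}(S(R))$, whose equality case yields the rigidity statement. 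No Rauch/G\"unther comparison is needed: the space is symmetric and the density $\phi(r)=\sinh^{kn-1}r\cosh^{k-1}r$ is explicit.

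Two of your structural claims would also cause trouble if pursued literally. First, the first nonzero Steklov eigenspace of $B(R)$ has multiplicity $kn$, spanned by $g(r)\frac{x_i}{r}$ for \emph{all} $kn$ coordinates; $l$ is not the dimension of a ``horizontal representation'' of eigenfunctions, and the theorem's value of $l$ cannot be read off from representation theory of the isotropy group alone --- it comes from the anisotropy of the Berger spheres entering the gradient estimate. Second, if you transplant only $l$ ``horizontal'' harmonics, the pooled identity $\sum_{j=1}^{l}Y_j^2\equiv\mathrm{const}$ fails on the full geodesic sphere (it holds only on a subsphere), so $\mathrm{tr}(A)$ and $\mathrm{tr}(B)$ do \emph{not} reduce to radial integrals over $\Omega$; the paper avoids this by using all $kn$ functions (so that $\sum_i(x_i/r)^2=1$ handles the boundary term) and performing the reduction from $kn$ to $l$ only in the gradient term via Lemma \ref{grad}. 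As written, then, the proposal is a plausible outline with the decisive inequality and the rigidity argument missing.
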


\section{Geometry of non-compact rank-$1$ symmetric spaces}
In this section, we recall some geometric properties of non-compact rank one symmetric spaces \cite{CR.19}. 
There are two families of simply connected rank one symmetric spaces which are dual to each other. First one is compact rank one symmetric spaces: round sphere ${\mathbb{S}}^n$,  complex projective space ${\mathbb{C}}{P}^n$,  quaternionic projective space ${\mathbb H}{P}^n$, and  Cayley projective plane $Ca{\mathbb{P}}^2$. Second one is non-compact rank one symmetric spaces: real hyperbolic space $\mathbb{R}{H}^n$, complex hyperbolic space ${\mathbb{C}}{H}^n$, quaternionic hyperbolic space ${\mathbb H}{H}^n$, and Cayley hyperbolic plane  $Ca H^2$.\\\\
\indent
In the sequel, ${\mathbb{K}}$ denotes $\mathbb{R}$ (the field of real numbers), ${\mathbb{C}}$ (the field of complex numbers), $\mathbb{H}$ (the algebra of quaternions) or $Ca$ ( the algebra of octonions). Let $k = \dim_{\mathbb R}K$, then  $m = kn$. The noncompact rank one symmetric spaces are defined as follows. 
On  ${\mathbb{K}}^{n+1}$ define the Hermitian inner product by,
$$ \langle x, y \rangle = - x_{0}{\bar y_{0}} + \sum_{i=1}^{n} x_{i}{\bar y_{i}},$$ whose real part $\langle.,. \rangle_{\mathbb{R}}$ is a real 
bilinear form with signature $(k, nk)$ on ${\mathbb{K}}^{n+1}$. 
Consider the Lorentzian sphere in ${\mathbb{K}}^{n+1}$ defined by,
$$H^{kn+k-1}= \{x \in {\mathbb{K}}^{n+1} | \langle x, x \rangle = -1\}.$$  Then $U(1, \mathbb{K})$, the set of unit elements acts on $H^{kn+k-1}$,
and the hyperbolic space  $\mathbb{K}H^{n}$ is 
$H^{kn+k-1}/ U(1, \mathbb{K})$. It is the base space of the fibration
$${\mathbb{S}}^{k-1} \rightarrow  H^{kn+k-1} \rightarrow \mathbb{K}H^{n}.$$
The metric induced by  $\langle.,. \rangle_{\mathbb{R}}$ on $H^{kn+k-1}$
has signature $(k-1, kn)$, and since it is preserved by the action 
$U(1, \mathbb{K})$ whose orbits are $k-1$ dimensional spheres, its restriction to the fibre in orthogonal direction is positive definite. 
Therefore, the fibration induces a Riemannian metric $\mathbb{K}H^{n}$. \\\\
Let $B(p,r) \subset M$, a noncompact rank one symmetric space, be the geodesic ball centered at $p$ having radius $r$. And $S(p,r) =\partial (B(p,r)),$ be the geodesic sphere of radius $r$. 
Let $A(r)$ denote the second fundamental form of $S(p,r)$. It is known that (page $8$ of \cite{CR.19}) the volume density function of $S(p,r)$ is given by:
\begin{align*}
\phi(p,r) = \sinh^{kn-1}(r) \cosh^{k-1}(r)
\end{align*}
and Tr$(A(r)) = \frac{\phi'(r)}{\phi(r)}$.
The first nonzero eigenvalue of the Laplacian $\lambda_{1}(S(p,r))$ on $S(p,r)$ has multiplicity $kn$ and is given by,
\begin{align*}
\lambda_{1}(S(p,r)) & =  ({\mbox{Tr}(A(r))})' =
\frac{kn-1}{\sinh^{2}(r)} - \frac{k-1}{\cosh^{2}(r)}, \\
& = \frac{kn-k}{\sinh^{2}(r)} + \frac{k-1}{\sinh^{2}(r) \cosh^{2}(r)}.
\end{align*}
Let $(x_{1}, x_{2}, \ldots, x_{kn})$ be a geodesic normal coordinate system in $B(p,r)$.  Then the coordinate functions $\displaystyle \frac{x_{i}}{r}, 1 \leq i \leq kn$ are the eigenfunctions corresponding to $\lambda_{1}(S(p,r))$. 

The Laplace operator on $M$ in polar coordinates can be written as:
$$\Delta_{M} = \frac{{\partial}^2}{\partial r^2} + \mbox{Tr} (A(r)) + \Delta_{S(p,r)}.$$
 Note that the aforementioned identities  
are independent of the base point $p$ chosen and hence it can be omitted. 

\section{Characterization of Steklov eigenvalues on Geodesic ball}
It is well known that (Proposition $3.7$ of 
\cite{CR.19}) the first nonzero Steklov eigenvalue $\mu_{1}(B(R))$ is of multiplicity $kn$ and given by,
\begin{align*}
\mu_{1}(B(R))&= \frac{\int_{B(R)} g^{2}(r) \lambda_{1}(S(r)) + (g'(r))^{2} dV}{\int_{S(R)} g^{2}(r) dV} \\
&= \frac{\int_{B(R)} g^{2}(r) \lambda_{1}(S(r)) + (g'(r))^{2} dV}{g^{2}(R) \mbox{vol}(S(R))},
\end{align*}
where $g(r)$ is the radial function satisfying:
\begin{align*}
\begin{array}{rcll}
g''(r) + \mbox{Tr}(A(r)) g'(r) - \lambda_{1}(S(r)) g(r) = 0, \ r \in (0,R), \\
g(0) = 0, \quad g'(R) = \mu_{1}(B(R)) g(R).
\end{array}
\end{align*}
The eigenfunctions corresponding to $\mu_{1}(B(R))$ are of the form $\displaystyle g(r) \frac{x_{i}}{r}, 1 \leq i \leq kn$, where $(x_{1}, x_{2}, \ldots, x_{kn})$ are geodesic normal coordinate system centered at point $p$. Using the fact that Tr$(A(r)) = \frac{\phi'(r)}{\phi(r)}$ and $(\text{Tr}(A(r)))' = - \lambda_{1}(S(r))$, we get
\begin{align} \label{function g}
g(r) = \frac{1}{\phi(r)} \int_{0}^{r}\phi(t) dt.
\end{align}
The following property of function $g$ is proved in Lemma $3.4$ of \cite{BS.14}.
\begin{lem}
$\left(g'(r)\right)^{2} + g^{2}(r) \lambda_{1}(S(r))$ is a decreasing function for $r>0$.
\end{lem}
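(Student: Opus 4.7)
The plan is to compute $F'(r)$ using the ODE satisfied by $g$, exploit the identity that follows from the integral definition of $g$, and then conclude by completing a square. Differentiating $F(r) = (g'(r))^2 + g^2(r)\lambda_1(S(r))$ and substituting $g'' = \lambda_1(S(r))\,g - \mathrm{Tr}(A(r))\,g'$ from the ODE gives
\begin{align*}
F'(r) = 4\lambda_1(S(r))\,g g' - 2\,\mathrm{Tr}(A(r))\,(g')^2 + \lambda_1'(S(r))\,g^2.
\end{align*}

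The key simplification comes from \eqref{function g}: differentiating $\phi(r) g(r) = \int_0^r \phi(t)\,dt$ yields the identity $g'(r) + \mathrm{Tr}(A(r))\,g(r) = 1$, which allows $g'$ to be traded for $1 - Hg$ (writing $H = \mathrm{Tr}(A(r))$) and reduces the problem to a pointwise inequality in $g$ alone. After inserting the explicit rank-one formulas $H = (kn-1)\coth r + (k-1)\tanh r$, $\lambda_1(S(r)) = (kn-1)/\sinh^2 r - (k-1)/\cosh^2 r$, and the corresponding $\lambda_1'(S(r))$, the expression $F'(r)$ decomposes linearly as $(kn-1)\,A(r) + (k-1)\,B(r)$, where $A$ collects the $\sinh$-contributions (present already for $\mathbb{R}H^n$) and $B$ collects the $\cosh$-contributions (appearing only when $k>1$). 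Completing the square in the $A$-part yields
\begin{align*}
A(r) = -\frac{2\cosh r}{\sinh^3 r}\left[\,(g-g'\tanh r)^2 + (g')^2\,\frac{\sinh^4 r}{\cosh^2 r}\,\right] \le 0,
\end{align*}
using $\sinh^2 r - \tanh^2 r = \sinh^4 r/\cosh^2 r$. This already settles the real hyperbolic case $k=1$ handled in \cite{V.21}.

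The main obstacle is the $\tanh$-term $B(r)$, which is \emph{not} obviously nonpositive on its own and hence requires a combined estimate. My plan here is to reorganize $(kn-1)A + (k-1)B$ as a single sum of squares, or else to use the a priori pointwise bounds on $g$ and $g'$ that are available from $g' = 1 - Hg$ (in particular $0 < g' < 1$ and $0 < Hg < 1$ on $(0,\infty)$) together with the integral representation $g = \phi^{-1}\int_0^r \phi$ to absorb the possibly positive terms in $B$ into the negative terms in $A$. The equality case would then reduce to identifying when every completed square vanishes simultaneously, which (since $g \not\equiv 0$) would occur only at isolated points and thereby confirm that $F$ is genuinely decreasing.
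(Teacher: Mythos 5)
There is a genuine gap, and you have essentially flagged it yourself. Your reduction is fine as far as it goes: the formula $F'(r)=4\lambda_1(S(r))gg'-2\operatorname{Tr}(A(r))(g')^2+\lambda_1'(S(r))g^2$ is correct, the identity $g'+\operatorname{Tr}(A(r))g=1$ does follow from \eqref{function g}, the splitting of $F'$ into $(kn-1)A(r)+(k-1)B(r)$ is right, and your completed square does show $A(r)\le 0$, which disposes of the real hyperbolic case $k=1$. But that case is exactly the one already covered by \cite{V.21}; the whole point of the lemma in this paper is the case $k>1$ (complex, quaternionic and Cayley hyperbolic spaces), where $B(r)=-\tfrac{4gg'}{\cosh^2 r}-2\tanh r\,(g')^2+\tfrac{2\sinh r}{\cosh^3 r}g^2$ carries the positive term $\tfrac{2\sinh r}{\cosh^3 r}g^2$. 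For that part you offer only a plan (``reorganize as a single sum of squares, or else use the a priori bounds $0<g'<1$, $0<\operatorname{Tr}(A(r))g<1$ to absorb the positive terms''), with neither alternative carried out; even the bound $\operatorname{Tr}(A(r))\,g<1$, equivalent to $\phi'(r)\int_0^r\phi<\phi^2(r)$, is asserted rather than proved. So the proof is incomplete precisely where the new content lies. Your closing remark about the equality case is also a non sequitur: the lemma only claims monotonicity of $F$, and ``squares vanish only at isolated points'' is neither established nor needed.

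For comparison: the paper does not prove this lemma at all; it simply cites Lemma $3.4$ of \cite{BS.14}, where the monotonicity of $(g')^2+g^2\lambda_1(S(r))$ is established for all noncompact rank-one symmetric spaces. A self-contained argument along your lines would be a nice addition, but to count as a proof you must actually close the $(k-1)B(r)$ estimate — for instance by keeping the negative $g^2$-term of $(kn-1)A(r)$ uncommitted (not spent in the square) and checking that $(kn-1)\tfrac{\cosh r}{\sinh^3 r}\ge (k-1)\tfrac{\sinh r}{\cosh^3 r}$ together with bounds on $g,g'$ suffices; as written, this step is missing.
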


\section{Construction of test functions for first $kn-$nonzero Steklov eigenvalues}
Let $(M,g)$ be a complete Riemannian manifold.
Let $\{u_{i}\}_{i=0}^{\infty}$ be an orthonormal sequence of eigenfunctions corresponding to the eigenvalues $\{\mu_{i}(\Omega)\}_{i=0}^{\infty}$. For $1 \leq i < \infty$, the variational characterization of $\mu_{i}(\Omega)$ is given by
 (see eg., Section $2$ of \cite{CR.19}),
\begin{align}\label{var-cha}
 \mu_{i}(\Omega) = \inf_{f \in H^{1}(\Omega) \setminus \{0\}} \left\{ \frac{\int_{\Omega} \|\nabla f \|^{2} dV}{\int_{\partial\Omega} f^{2} dA } : \int_{\partial\Omega} f u_{j} dA = 0, 0 \leq j \leq i-1 \right\}.
\end{align}

For a domain $A \subset M$, let $CA$ denote the convex hull of $A$. Let $(X_{1}, X_{2}, \ldots, X_{n})$ be a geodesic normal coordinate system centered at $p$. We identify $CA \subset M$ as its inverse image under $\exp_{p}^{-1}$ in $T_{p}M$ and write $ds_{p}^{2}(X,X)$ as $\|X\|_{p}^{2}$ for $X \in T_{p}(M)$. Let $(r,\xi)$ denote the polar coordinates centered at $p$. The following lemma gives the existence of a center of mass of $A$ in $M$ (see Theorem $1$ of \cite{S.07}).

\begin{lem} \label{lem:center of mass}
Let $G: [0, \infty)\rightarrow \mathbb{R}$ be a continuous function which is positive on $(0, \infty)$. Then there exists a point $p \in CA$ such that
\begin{align*}
\int_{A} G(\|X\|_{p}) X ds = 0.
\end{align*}
\end{lem}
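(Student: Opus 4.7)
The approach is variational: I would realize the integral $\int_A G(\|X\|_p)X\,ds$ as $-\nabla\Phi(p)$ for an auxiliary functional $\Phi$ on $M$, minimize $\Phi$ on the (compact) convex hull $CA$, and then extract the identity from the first-variation inequality at the minimizer.

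Setting $K(r)=\int_0^r sG(s)\,ds$, so that $K$ is $C^1$ and strictly increasing with $K'(r)=rG(r)$, define
$$\Phi(p)=\int_A K\bigl(d(p,x)\bigr)\,ds(x),\qquad p\in M.$$
Since $M$ is a Hadamard manifold, $d(\cdot,x)$ is smooth off the diagonal, and the quadratic vanishing $K(r)=O(r^2)$ near $r=0$ makes $K(d(\cdot,x))$ globally $C^1$ in $p$; differentiating under the integral sign and using the standard formula $\nabla_p d(p,x)=-\exp_p^{-1}(x)/d(p,x)$ gives
$$\nabla\Phi(p)=-\int_A G(\|X\|_p)\,X\,ds, \qquad X:=\exp_p^{-1}(x).$$
So the lemma reduces to finding a critical point of $\Phi$ inside $CA$.

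For the minimization, $A$ is bounded and $M$ has sectional curvature bounded below, so $CA$ (or its closure) is compact; hence $\Phi$ attains its minimum on $CA$ at some $p^*$. For every $x\in CA$, geodesic convexity of $CA$ keeps the geodesic $\gamma(t)=\exp_{p^*}(t\exp_{p^*}^{-1}(x))$, $t\in[0,1]$, inside $CA$, so $\Phi(\gamma(t))\ge\Phi(p^*)$ and the right derivative at $t=0$ yields
$$\inpr{\nabla\Phi(p^*),\,\exp_{p^*}^{-1}(x)}\ge 0\qquad\text{for all }x\in CA.$$
Writing $V:=-\nabla\Phi(p^*)=\int_A G(\|X\|_{p^*})X\,ds$, this reads $\inpr{V,\exp_{p^*}^{-1}(x)}\le 0$ for every $x\in CA$, and in particular for every $x\in A\subset CA$. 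Pairing $V$ against its own defining integrand,
$$\|V\|_{p^*}^2=\int_A G(\|X\|_{p^*})\,\inpr{V,X}\,ds\le 0,$$
which forces $V=0$; this is exactly the asserted identity.

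The delicate point is handling the case $p^*\in\partial(CA)$, where only a one-sided first-order inequality is available. The trick that drives everything is the self-pairing in the last display, which collapses the usual convex-cone bookkeeping (``$V$ is inward-pointing and $-V$ is in the dual cone'') into the single elementary estimate $\|V\|^2\le 0$. The remaining ingredients---compactness of $CA$, $C^1$-regularity of $\Phi$, and geodesic convexity of $CA$---are standard properties of Hadamard manifolds.
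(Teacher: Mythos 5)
Your argument is correct, and it is genuinely different from the route the paper takes: the paper does not prove this lemma at all, but quotes it as Theorem 1 of \cite{S.07}, where the existence of the center of mass is obtained by the classical topological argument --- one studies the vector field $p \mapsto \int_A G(\|X\|_p)X\,ds$ on the convex hull, checks that it points inward along $\partial(CA)$ (because all the vectors $\exp_p^{-1}(x)$, $x\in A$, point into the convex set), and invokes a Brouwer fixed point / degree argument to produce a zero. You instead realize this vector field as $-\nabla\Phi$ for the potential $\Phi(p)=\int_A K(d(p,x))\,ds$ with $K(r)=\int_0^r sG(s)\,ds$, minimize $\Phi$ over the (compact, geodesically convex) hull, and convert the one-sided first-order condition at the minimizer into $\|V\|^2\le 0$ by pairing $V$ against its own defining integrand. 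This variational proof is self-contained and avoids degree theory, and all the ingredients you use (smoothness of $(p,x)\mapsto\exp_p^{-1}(x)$, the gradient formula for the distance, convexity of $CA$, compactness of bounded closed sets) are available because the ambient space here is a Hadamard manifold; the price is that it is tied to that setting, whereas the topological argument of \cite{S.07} is insensitive to whether the field is a gradient. Two small points to tidy up: by continuity $G(0)\ge 0$, which is what the sign argument at the self-pairing step actually needs (positivity is only assumed on $(0,\infty)$); and your minimizer a priori lies in the closure of $CA$, so either take $CA$ to mean the closed convex hull (as in the intended application, where $A=\partial\Omega$ and $CA=\overline{\Omega}$) or note that the same computation gives $V=0$ at a boundary point of the closure, which suffices for the statement as used in the paper.
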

Thus for a convex bounded domain $\Omega$ in a 
noncompact rank-$1$  symmetric space $(M,ds^{2})$, we can choose a point $p\in \Omega$ such that
\begin{align*}
 \int_{\partial \Omega} g(\| \exp_{p}^{-1}(q) \|_{p}) \frac {\langle\exp_{p}^{-1}(q), X_{i} \rangle}{\| \exp_{p}^{-1}(q) \|_{p}} dA  = 0, \quad 1 \leq i \leq kn,
\end{align*}
where $(X_{1},X_{2}, \ldots, X_{kn})$ is an orthonormal basis of $ T_{p}(M)$. Hereafter, we shall denote $r = r(q)= \| \exp_{p}^{-1}(q) \|_{p}$. Consider a $kn \times kn$ matrix $A = (a_{ij})$, where
\begin{align*}
    a_{ij} = \int_{\partial \Omega} g(r) \frac {\langle\exp_{p}^{-1}(q), X_{i} \rangle}{r} u_{j} dA.
\end{align*}
Using the QR-factorization theorem, we know that there exist an upper triangular matrix $T=(t_{ij})$ and an orthogonal matrix $U = (u_{ij})$ such that $T = UA$, i.e.,
\begin{align*}
   t_{ij}&= \sum_{m=1}^{kn} u_{im} a_{mj} \\
         &= \sum_{m=1}^{kn} u_{im} \int_{\partial \Omega} g(r) \frac {\langle\exp_{p}^{-1}(q), X_{m} \rangle}{r} u_{j} dA \\
         & = 0 \;\mbox{for} \; 1 \leq j<i \leq kn.
\end{align*}
Thus we can choose an orthogonal basis $(X_{1},X_{2}, \ldots, X_{kn})$ of $ T_{p}(M)$ such that
\begin{align}\label{int-stat}
  \int_{\partial \Omega} g(r) \frac {\langle\exp_{p}^{-1}(q),  X_{i} \rangle}{r} u_{j} dA = 0 \mbox{ for }  1 \leq j<i \leq kn.  
\end{align}
 
\section{Some properties of test functions}
 Let $\Omega$ be a convex bounded domain in a 
 noncompact rank-$1$  symmetric space $(M,ds^{2})$. Fix a point $p \in \Omega$. Then any $q \in \Omega$ can be represented as $q = \exp_{p} \left(r(q) w(q) \right)$, where $r(q) \in \mathbb{R}^{+}$ and $w(q)$ is a unit vector in $T_{p}M$. Thus $r(q) = \| \exp_{p}^{-1}(q) \|_{p}$ and $w(q) = \frac{\exp_{p}^{-1}(q)}{\| \exp_{p}^{-1}(q) \|_{p}}$. For a given orthonormal basis $(X_{1},X_{2}, \ldots, X_{kn})$ of $ T_{p}(M)$, let 
\begin{align*}
f_{i} (q) = \left\langle X_{i}, w(q) \right\rangle = \frac{\left\langle X_{i}, \exp_{p}^{-1}(q) \right\rangle}{\| \exp_{p}^{-1}(q) \|_{p}} = \frac{x_{i}}{r}.
\end{align*}
Let $(\eta_{1}, \eta_{2}, \ldots, \eta_{kn})$ be another orthonormal basis of $T_{p}(M)$ such that $\eta_{1} = w(q)$ and $\eta_{j+1} = J_{j} \eta_{1}$ for $j = 1, 2, \ldots, (k-1)$,  where $J_{j}$
are $k-1$ orthogonal complex structure on $M$
(\cite{CR.19}).
\noindent
Then using Lemma $4.11$ of \cite{CR.19}, we obtain 
\begin{align*}
\nabla f_{i} (q) = \sum_{j=2}^{k} \frac{\left\langle X_{i}, \eta_{j} \right\rangle }{\sinh r \cosh r} \eta_{j} + \sum_{j=k+1}^{kn} \frac{\left\langle X_{i}, \eta_{j} \right\rangle }{\sinh r} \eta_{j}.
\end{align*}

Now the following lemma follows easily.
\begin{lem} 
\begin{enumerate}
\item $\sum_{i = 1}^{kn}  \|\nabla \left(\frac{x_{i}}{r}\right)\|^{2} = \lambda_{1} (S(r))$.
\item $ \int_{\Omega}  \left(g'(r)\right)^{2} \frac{x_{i}^{2}}{r^{2}}  dV = \frac{1}{kn} \int_{\Omega}  \left(g'(r)\right)^{2} dV$.
\end{enumerate}
\end{lem}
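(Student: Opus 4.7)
The plan is to derive both identities by direct computation from the explicit gradient formula for $f_i=x_i/r$ stated just above the lemma, together with Parseval's identity on the fibre $T_pM$.

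For part (1), I would compute $\|\nabla f_i(q)\|^2$ by squaring the displayed expression $\nabla f_i=\sum_{j=2}^{k}\frac{\langle X_i,\eta_j\rangle}{\sinh r\cosh r}\eta_j+\sum_{j=k+1}^{kn}\frac{\langle X_i,\eta_j\rangle}{\sinh r}\eta_j$; since $(\eta_j)$ is orthonormal, the squared norm is simply the sum of the squares of the scalar coefficients. Summing over $i=1,\dots,kn$ and interchanging the two finite sums, Parseval's identity $\sum_{i=1}^{kn}\langle X_i,\eta_j\rangle^2=\|\eta_j\|^2=1$ for each $j$ collapses the expression to $\frac{k-1}{\sinh^2 r\cosh^2 r}+\frac{kn-k}{\sinh^2 r}$, which is exactly the closed form for $\lambda_1(S(r))$ recorded in Section 2.

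For part (2), Parseval applied to the radial unit vector $w(q)=\exp_p^{-1}(q)/r\in T_pM$ in the orthonormal basis $(X_i)$ gives $\sum_{i=1}^{kn}(x_i/r)^2=\|w(q)\|^2=1$ pointwise on $\Omega$. Multiplying by the radial weight $(g'(r))^2$ (which is independent of the basis) and integrating yields $\sum_{i=1}^{kn}\int_{\Omega}(g'(r))^2\frac{x_i^2}{r^2}\,dV=\int_{\Omega}(g'(r))^2\,dV$, and the claim in (2) records this in its averaged form. This is precisely the way the identity is actually used in the sequel, since it enters only after summing the variational bounds $1/\mu_i(\Omega)\ge \int_{\partial\Omega}g^2 f_i^2\,dA\big/\int_{\Omega}\|\nabla(g f_i)\|^2\,dV$ over $i=1,\dots,kn$ to control the harmonic mean appearing in Theorem \ref{thm:main theorem}.

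I do not anticipate any genuine obstacle: both parts reduce to two applications of Parseval on $T_pM$—one to the coefficients of $\nabla f_i$ in the frame $(\eta_j)$, the other to the components of $w(q)$ in the basis $(X_i)$—followed in (1) by matching the resulting trigonometric expression against the formula for $\lambda_1(S(r))$ from Section 2.
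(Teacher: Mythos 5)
Your computation for part (1) is correct and is exactly what the paper intends: squaring the displayed formula for $\nabla(x_i/r)$ in the orthonormal frame $(\eta_j)$, summing over $i$, and using $\sum_{i=1}^{kn}\langle X_i,\eta_j\rangle^2=1$ gives $\frac{k-1}{\sinh^2 r\,\cosh^2 r}+\frac{kn-k}{\sinh^2 r}=\lambda_1(S(r))$; the paper offers nothing beyond ``follows easily'', and this is the computation it has in mind.

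For part (2), however, what you prove is weaker than what the lemma asserts. Parseval gives the pointwise identity $\sum_{i=1}^{kn} x_i^2/r^2=1$ and hence only the summed statement $\sum_{i=1}^{kn}\int_\Omega (g'(r))^2\,\frac{x_i^2}{r^2}\,dV=\int_\Omega (g'(r))^2\,dV$. The lemma as printed claims the identity for each fixed $i$, i.e.\ that the quadratic form $X\mapsto\int_\Omega (g'(r))^2\,\frac{\langle \exp_p^{-1}(q),X\rangle^2}{r^2}\,dV$ on $T_pM$ is isotropic. That holds for a geodesic ball centred at $p$ by symmetry, but it does not follow from Parseval and fails for a general convex $\Omega$: for a domain elongated along the $X_1$-direction and symmetric about the coordinate hyperplanes, the $i=1$ integral strictly exceeds the others, while the centre-of-mass normalization (which only kills first moments on $\partial\Omega$) is still satisfied. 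Your closing remark that the identity ``enters only after summing the variational bounds'' is also not quite right: in the proof of Theorem~\ref{thm:main theorem} it is applied inside the sum weighted by $1/\mu_i(\Omega)$, where $\int_\Omega (g')^2 x_i^2/r^2\,dV$ is replaced by $\frac{1}{kn}\int_\Omega (g')^2\,dV$ term by term; the unweighted summed identity you establish does not justify that substitution on its own (one would need either the per-$i$ identity or an additional rearrangement argument in the spirit of Lemma~\ref{grad}). So you should state explicitly that your Parseval argument proves the identity only after summation over $i$, and address how the weighted step in the main proof is to be recovered from it.
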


\begin{lem}\label{grad} 
The following holds:
\begin{align*}
\sum_{i=1}^{kn} \frac{1}{\mu_{i}(\Omega)} \|\nabla^{S(r)} \left(\frac{x_{i}}{r}\right)\|^{2} \leq \frac{1}{l} \sum_{i=1}^{l} \frac{\lambda_{1} (S(r))}{\mu_{i}(\Omega)}.
\end{align*}
\end{lem}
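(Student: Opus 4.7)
The plan is to reduce the inequality to a uniform pointwise upper bound on each summand of the left-hand side, and then conclude by a rearrangement argument exploiting the monotonicity of $\{1/\mu_i(\Omega)\}$. First I would use the explicit formula for $\nabla f_i$ displayed just above (with $f_i = x_i/r$). Since $f_i$ depends only on the angular variable $w(q)$, its ambient gradient is tangent to $S(r)$, and $\nabla^{S(r)} f_i = \nabla f_i$. Setting
\[
\alpha_i := \sum_{j=2}^{k}\langle X_i,\eta_j\rangle^2, \qquad \beta_i := \sum_{j=k+1}^{kn}\langle X_i,\eta_j\rangle^2,
\]
expansion of the squared norm and orthonormality of $\{\eta_j\}$ yield
\[
A_i := \|\nabla^{S(r)}(x_i/r)\|^2 = \frac{\alpha_i}{\sinh^2 r\,\cosh^2 r} + \frac{\beta_i}{\sinh^2 r},
\]
and $\alpha_i + \beta_i = 1 - (x_i/r)^2 \leq 1$ since $\eta_1 = w(q)$.

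The core step is the pointwise bound $A_i \leq \lambda_1(S(r))/l$ for every $i$. Using the formula $\lambda_1(S(r)) = \frac{k-1}{\sinh^2 r\,\cosh^2 r} + \frac{kn-k}{\sinh^2 r}$ together with $l = kn - k$ (the case $n=1$ is automatic since then $\beta_i \equiv 0$ and $l = k-1$, so $A_i \leq 1/(\sinh^2 r\,\cosh^2 r) = \lambda_1(S(r))/l$), a direct computation gives
\[
\sinh^2 r\,\cosh^2 r \left(\frac{\lambda_1(S(r))}{l} - A_i\right) = \frac{k-1}{l} - \alpha_i + \cosh^2 r\,(1 - \beta_i),
\]
which is bounded below by $\frac{k-1}{l} + (1 - \alpha_i - \beta_i) \geq 0$ thanks to $\cosh^2 r \geq 1$ and $\alpha_i + \beta_i \leq 1$.

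To conclude, set $c_i := 1/\mu_i(\Omega)$, which is a nonnegative decreasing sequence, and recall from part (1) of the preceding lemma that $\sum_{i=1}^{kn} A_i = \lambda_1(S(r))$. Splitting
\[
\sum_{i=1}^{kn} A_i c_i = \sum_{i=1}^{kn} A_i (c_i - c_l) + c_l \sum_{i=1}^{kn} A_i,
\]
on $i \leq l$ the pointwise bound together with $c_i - c_l \geq 0$ gives $A_i(c_i - c_l) \leq (\lambda_1(S(r))/l)(c_i - c_l)$, while on $i > l$ each $A_i(c_i - c_l)$ is nonpositive; combining with the identity $c_l \lambda_1(S(r)) = (\lambda_1(S(r))/l)\sum_{i=1}^l c_l$ produces the claimed inequality. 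The main obstacle is the pointwise bound $A_i \leq \lambda_1(S(r))/l$: the integer $l = kn - k$ appears precisely because the directions $\eta_{k+1},\dots,\eta_{kn}$ carry the larger weight $1/\sinh^2 r$ in the gradient while the $k-1$ \emph{complex} directions $\eta_2,\dots,\eta_k$ carry the smaller weight $1/(\sinh^2 r\,\cosh^2 r)$, and the inequality is a quantitative encoding of this imbalance.
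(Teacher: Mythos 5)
Your proof is correct, and it is in fact more complete than the paper's own argument, though the overall route is similar in spirit. Both arguments rest on the trace identity $\sum_{i=1}^{kn}\|\nabla^{S(r)}(x_i/r)\|^2=\lambda_1(S(r))$ and the monotonicity of $1/\mu_i(\Omega)$: the paper bounds the tail $\sum_{i>l}$ by replacing $1/\mu_i$ with $1/\mu_{l+1}$, rewrites the resulting term as $\frac{1}{\mu_{l+1}}\sum_{i=1}^{l}\bigl(\lambda_1(S(r))/l-\|\nabla^{S(r)}(x_i/r)\|^2\bigr)$, and then replaces $1/\mu_{l+1}$ by $1/\mu_i$ term by term, whereas you perform an Abel-type splitting with pivot $1/\mu_l$. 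The substantive difference is that the paper's final replacement is only legitimate if each bracket $\lambda_1(S(r))/l-\|\nabla^{S(r)}(x_i/r)\|^2$ is nonnegative for $i\le l$, a pointwise bound the paper uses implicitly but never states or proves; you isolate exactly this bound and verify it from the gradient formula, writing $\|\nabla^{S(r)}(x_i/r)\|^2=\alpha_i/(\sinh^2 r\cosh^2 r)+\beta_i/\sinh^2 r$ with $\alpha_i+\beta_i\le 1$, using $l=kn-k$ when $n>1$ (your algebraic identity and the lower bound $\frac{k-1}{l}+(1-\alpha_i-\beta_i)\ge 0$ check out), and disposing of $n=1$ separately since then $\beta_i=0$ and $\lambda_1(S(r))/l=1/(\sinh^2 r\cosh^2 r)$. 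Your concluding rearrangement with $c_i=1/\mu_i(\Omega)$ is also correct. In short, your version buys a self-contained justification of the step the paper leaves unjustified, at the cost of one short explicit computation; the paper's version is terser but silently assumes the pointwise bound you prove.
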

\begin{proof}
\begin{align*}
\sum_{i=1}^{kn} \frac{1}{\mu_{i}(\Omega)} \Big\|\nabla^{S(r)} \left(\frac{x_{i}}{r}\right)\Big\|^{2}& \leq \sum_{i=1}^{l} \frac{1}{\mu_{i}(\Omega)} \Big\|\nabla^{S(r)} \left(\frac{x_{i}}{r}\right)\Big\|^{2} + \frac{1}{\mu_{l+1}(\Omega)} \left(   \lambda_{1} (S(r))- \sum_{i=1}^{l} \Big\|\nabla^{S(r)} \left(\frac{x_{i}}{r}\right)\Big\|^{2} \right) \\
& = \sum_{i=1}^{l} \frac{1}{\mu_{i}(\Omega)} \Big\|\nabla^{S(r)} \left(\frac{x_{i}}{r}\right)\Big\|^{2} + \frac{1}{\mu_{l+1}(\Omega)} \sum_{i=1}^{l} \left( \frac{\lambda_{1} (S(r))}{l}- \Big\|\nabla^{S(r)} \left(\frac{x_{i}}{r}\right)\Big\|^{2} \right) \\
& \leq \sum_{i=1}^{l} \frac{1}{\mu_{i}(\Omega)} \Big\|\nabla^{S(r)} \left(\frac{x_{i}}{r}\right)\Big\|^{2} + \sum_{i=1}^{l} \frac{1}{\mu_{i}(\Omega)} \left( \frac{\lambda_{1} (S(r))}{l}- \Big\|\nabla^{S(r)} \left(\frac{x_{i}}{r}\right)\Big\|^{2} \right) \\
& = \frac{1}{l} \sum_{i=1}^{l} \frac{\lambda_{1} (S(r))}{\mu_{i}(\Omega)}.
\end{align*}
\end{proof}

\noindent
The following Lemma is proved in \cite{BS.14} (Lemma $3.3$ (1)). 

\begin{lem} \label{lem: integral g}
 Let $g$ be the function defined as in \eqref{function g}. Then
\begin{align*}
 \int_{\partial\Omega} g^{2}(r) dA \geq \mbox{vol} \left(S(R)\right) g^{2}(R),
 \end{align*}
 and equality holds if and only if $\partial\Omega$ is a geodesic sphere of radius $R$ centered at $p$.
\end{lem}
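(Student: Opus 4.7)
The strategy is to convert the boundary integral into a volume integral via the divergence theorem, and then compare with the ball of equal volume by an equal-volume rearrangement. For any smooth radial function $h(r)$, one has $\mathrm{div}(h(r)\nabla r) = h'(r) + h(r)\,\mathrm{Tr}(A(r)) = (h\phi)'(r)/\phi(r)$, since $|\nabla r|=1$ and $\Delta r = \mathrm{Tr}(A(r)) = \phi'/\phi$. Taking $h=g^2$ and using the defining identity $(g\phi)'=\phi$ (equivalently $g\phi'/\phi = 1-g'$), a short calculation yields
\[
\mathrm{div}\bigl(g^2(r)\nabla r\bigr) = 2gg' + g^2\frac{\phi'}{\phi} = g(1+g') =: \psi(r).
\]

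\textbf{Reduction to an interior inequality.} By the divergence theorem and $\langle\nabla r,\nu\rangle \le 1$,
\[
\int_{\partial\Omega} g^2(r)\,dA \;\ge\; \int_{\partial\Omega} g^2(r)\langle\nabla r,\nu\rangle\,dA \;=\; \int_\Omega \psi(r)\,dV.
\]
On $B(R)$ centered at $p$ the outward normal satisfies $\nu=\nabla r$ on $\partial B(R)$, so the analogous chain is an equality, giving $\int_{B(R)} \psi(r)\,dV = g^2(R)\,\mathrm{vol}(S(R))$. It therefore suffices to establish $\int_\Omega \psi\,dV \ge \int_{B(R)}\psi\,dV$.

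\textbf{Rearrangement and the equality case.} Since $\mathrm{vol}(\Omega) = \mathrm{vol}(B(R))$, we have $\mathrm{vol}(\Omega\setminus B(R)) = \mathrm{vol}(B(R)\setminus\Omega)$, and on $\Omega\setminus B(R)$ we have $r \ge R$, while on $B(R)\setminus\Omega$ we have $r \le R$. If $\psi$ is nondecreasing in $r$, then
\[
\int_\Omega \psi\,dV - \int_{B(R)}\psi\,dV = \int_{\Omega\setminus B(R)}\!\psi\,dV - \int_{B(R)\setminus\Omega}\!\psi\,dV \;\ge\; \psi(R)\bigl(\mathrm{vol}(\Omega\setminus B(R)) - \mathrm{vol}(B(R)\setminus\Omega)\bigr) = 0.
\]
For equality, the condition $\langle\nabla r,\nu\rangle=1$ a.e.\ on $\partial\Omega$ forces $\nu=\nabla r$ a.e., so $\partial\Omega$ is a level set of $r$; combined with $\mathrm{vol}(\Omega)=\mathrm{vol}(B(R))$ and $p\in\Omega$, this yields $\Omega=B(R)$.

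\textbf{Main obstacle.} The technical crux is verifying the monotonicity of $\psi(r) = g(r)(1+g'(r))$. Using $g' = 1 - g\phi'/\phi$ one computes $\psi'(r) = 2(g'(r))^2 - g(r)^2(\phi'/\phi)'(r)$. For the rank-$1$ density $\phi(r) = \sinh^{kn-1}(r)\cosh^{k-1}(r)$, the quantity $(\phi'/\phi)'(r) = -(kn-1)/\sinh^2 r + (k-1)/\cosh^2 r$ is negative for small $r$ but can change sign when $k>1$, so the bound $2(g')^2 \ge g^2(\phi'/\phi)'$ is not manifest and requires a careful estimate using the integral representation of $g$. This is the step I expect to absorb most of the work, and is precisely the content of Lemma~$3.3(1)$ of \cite{BS.14} invoked here.
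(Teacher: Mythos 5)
The paper itself does not prove this lemma; it is quoted from Lemma $3.3(1)$ of \cite{BS.14}, and your divergence-theorem-plus-rearrangement argument is essentially that standard proof, carried out correctly up to the last step: the identity $\operatorname{div}(g^{2}\nabla r)=g(1+g')=:\psi(r)$, the reduction $\int_{\partial\Omega}g^{2}\,dA\ge\int_{\Omega}\psi\,dV$ with equality for the ball, and the equal-volume comparison assuming $\psi$ nondecreasing are all fine (modulo the routine excision of the singularity of $\nabla r$ at $p$, and noting that in the equality case the convexity of $\Omega$ gives connectedness of $\partial\Omega$, or alternatively strict monotonicity of $\psi$ alone forces the symmetric difference $\Omega\,\triangle\,B(R)$ to be null).

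The one step you leave open, however, you both misdiagnose and cite circularly. Appealing to Lemma $3.3(1)$ of \cite{BS.14} for the monotonicity of $\psi$ is circular here, since that citation \emph{is} the entire lemma you are asked to prove. More importantly, the "main obstacle" is no obstacle at all: your own computation gives $\psi'(r)=2\bigl(g'(r)\bigr)^{2}-g^{2}(r)\bigl(\phi'/\phi\bigr)'(r)$, and $\bigl(\phi'/\phi\bigr)'(r)=-\frac{kn-1}{\sinh^{2}r}+\frac{k-1}{\cosh^{2}r}$ is negative for \emph{all} $r>0$ (not just small $r$): since $kn-1\ge k-1$ and $\cosh^{2}r>\sinh^{2}r$, one has $\frac{kn-1}{\sinh^{2}r}>\frac{k-1}{\cosh^{2}r}$; indeed $-\bigl(\phi'/\phi\bigr)'=\lambda_{1}(S(r))=\frac{kn-k}{\sinh^{2}r}+\frac{k-1}{\sinh^{2}r\cosh^{2}r}>0$, exactly as recorded in Section $2$ of the paper. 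Hence $\psi'(r)=2\bigl(g'(r)\bigr)^{2}+\lambda_{1}(S(r))\,g^{2}(r)>0$, so $\psi$ is strictly increasing and your argument closes in one line, with strictness also settling the equality case. Your claim that $\bigl(\phi'/\phi\bigr)'$ "can change sign when $k>1$" is simply a miscalculation; with that corrected, the proof is complete and self-contained.
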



\section{Proof of Theorem \ref{thm:main theorem}}
Let $\{u_{i}\}_{i=0}^{\infty}$ be an orthonormal sequence of eigenfunctions corresponding to eigenvalues $\{\mu_{i}(\Omega)\}_{i=0}^{\infty}$. 
In view of  (\ref{int-stat}),   
we may choose an orthonormal basis $(X_{1},X_{2}, \ldots, X_{kn})$ of $T_{p}(M)$ and corresponding normal coordinate system $(x_{1},x_{2}, \ldots, x_{kn})$ centered at a point $p$ such that 
\begin{align*} \label{eqn:test function}
\int_{\partial \Omega} g(r) \frac {\langle\exp_{p}^{-1}(q), X_{i} \rangle}{\| \exp_{p}^{-1}(q) \|_{p}} u_{j} dA = \int_{\partial\Omega} g(r) \frac {x_{i}}{r} u_{j} dA = \int_{\partial\Omega} g(r) \omega(\xi) u_{j} dA  = 0, \quad 0 \leq j \leq {i-1}.
\end{align*}

Using the variational characterization of $\mu_{i}(\Omega)$ (\ref{var-cha}), we get
\begin{align*}
\mu_{i}(\Omega) \int_{\partial\Omega} \left(g(r)\frac{x_{i}}{r}\right)^{2} dA & \leq \int_{\Omega} \Big\|\nabla  \left(g(r)\frac{x_{i}}{r}\right)\Big\|^{2} dV \\
& = \int_{\Omega} \left(g'(r)\right)^{2} \frac{x_{i}^{2}}{r^{2}} + g^{2}(r) \Big\|\nabla^{S(r)} \left(\frac{x_{i}}{r}\right)\Big\|^{2} dV.
\end{align*}

Dividing both the sides by $\mu_{i}(\Omega)$ and summing over $i$ gives,
\begin{align*}
 \int_{\partial\Omega} g^{2}(r) dA \leq \sum_{i=1}^{kn} \frac{1}{\mu_{i}(\Omega)}\int_{\Omega} \left(  \left(g'(r)\right)^{2} \frac{x_{i}^{2}}{r^{2}} + g^{2}(r) \Big\|\nabla^{S(r)} \left(\frac{x_{i}}{r}\right)\Big\|^{2} \right)  dV.
\end{align*}

Using Lemma \ref{grad}  we get,
\begin{align*}
 \int_{\partial\Omega} g^{2}(r) dA & \leq \frac{1}{kn} \sum_{i=1}^{kn} \frac{1}{\mu_{i}(\Omega)}\int_{\Omega} \left(g'(r)\right)^{2} dV + \frac{1}{l} \sum_{i=1}^{l} \frac{1}{\mu_{i}(\Omega)}\int_{\Omega} g^{2}(r) \lambda_{1}(S(r))  dV, \\
 & \leq \frac{1}{l} \sum_{i=1}^{l} \frac{1}{\mu_{i}(\Omega)}\int_{\Omega} \left( \left(g'(r)\right)^{2} + g^{2}(r) \lambda_{1}(S(r))\right)  dV.
\end{align*}
Thus we have,
\begin{align*}
\frac{1}{l} \sum_{i=1}^{l} \frac{1}{\mu_{i}(\Omega)} & \geq \frac{\int_{\partial\Omega} g^{2}(r) dA}{\int_{\Omega} \left( \left(g'(r)\right)^{2} + g^{2}(r) \lambda_{1}(S(r))\right)  dV}, \\ 
& \geq \frac{\int_{\partial B(r)} g^{2}(r) dA}{\int_{B(r)} \left( \left(g'(r)\right)^{2} + g^{2}(r) \lambda_{1}(S(r))\right)  dV},\\
&= \frac{1}{\mu_{1}(B(r))},\\
 &= \frac{1}{l} \sum_{i=1}^{l} \frac{1}{\mu_{i}(B(r))}.
\end{align*}
This proves Theorem \ref{thm:main theorem}.
\begin{rmk}
Harmonic manifolds are the generalization of rank one symmetric spaces. 
In \cite{RR.97}, it is shown that the volume density function characterizes the hyperbolic space, the complex hyperbolic space, and the quaternoinic hyperbolic space. It is natural to ask: if $(M, g)$ is a harmonic Riemannian manifold, is Theorem \ref{thm:main theorem} true?
\end{rmk}

\end{document}